\theoremstyle{plain}
\newtheorem{theorem}{Theorem}[section]
\newtheorem{proposition}[theorem]{Proposition}
\theoremstyle{definition}
\newtheorem{definition}[theorem]{Definition}
\newtheorem{counter example}[theorem]{Counter Example}
\numberwithin{equation}{section}
\DeclareMathAlphabet{\mathscr}{OT1}{pzc}{m}{it} 
\begin{document}
\Large{
		\title{ SUM OF TWO RARE SETS IN A CATEGORY BASE CAN BE ABSOLUTELY NON-bAIRE}
		
		\author[S. Basu]{Sanjib Basu}
		\address{\large{Department of Mathematics,Bethune College,181 Bidhan Sarani}}
		\email{\large{sanjibbasu08@gmail.com}}
		
		\author[A.C.Pramanik]{Abhit Chandra Pramanik}
	\address{\large{Department of Pure Mathematics, University of Calcutta, 35, Ballygunge Circular Road, Kolkata 700019, West Bengal, India}}
	\email{\large{abhit.pramanik@gmail.com}}

		\thanks{The second author thanks the CSIR, New Delhi – 110001, India, for financial support}
	\begin{abstract}
   In this paper, we give generalized version (in category bases) of a result of Kharazishvili dealing with absolute nonmeasurability of the Minkowski's sum of certain universal measure zero sets which were based on an earlier result of Erd\H{o}s, Kunen and Mauldin in the real line.
	\end{abstract}
\subjclass[2020]{28A05, 54A05, 54E52}
\keywords{Polish topological vector space, continuum hypothesis, Minkowski's sum, perfect base, perfect translation base, linearly invariant family, countable chain condition (c.c.c), complementary bases, equivalent bases, rare sets, Bernstein sets, meager sets}
\thanks{}
	\maketitle

\section{INTRODUCTION}
Sierpi\'nski [3] showed that there exists two Lebesgue Measure zero sets $A$ and $B$ such that their Minkowski (or, algebraic) sum $A+B$ is nonmeasurable. In proving this result, he used Hamel basis and also the fact that by using an interesting property of the Cantor set in the real line $\mathbb{R}$, it is possible to construct a measure zero, first category set $A$ such that $A+A=\mathbb{R}$. Kharazishvili later gave a generalization of this result [4] in the following manner:
\begin{theorem}
	Let $\mu$ be a non-zero, complete, $\sigma$-finite measure on $\mathbb{R}$ quasi-invariant under a group H of affine transformations. Suppose there are two measure zero sets $A$ and $B$ such that $A+B=\mathbb{R}$. Then there exists a $\mu$-measure zero set $X$ such that $X+X$ is not $\mu$-measurable.   
\end{theorem}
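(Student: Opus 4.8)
The plan is to reduce the hypothesis to a single null set and then build $X$ as a kind of Bernstein set for the \emph{sumset} operation. Put $Z=A\cup B$. Since the $\mu$-null sets form a $\sigma$-ideal and $\mu$ is complete, $Z$ is $\mu$-null, while $Z+Z\supseteq A+B=\mathbb{R}$ forces $Z+Z=\mathbb{R}$; surjectivity of the addition map $Z\times Z\to\mathbb{R}$ also gives $|Z|=\mathfrak{c}$. Because $\mu$ is non-zero, $\sigma$-finite and quasi-invariant under a group of affine maps, it is (as is standard in this setting) continuous, so every set of positive measure is uncountable, and by inner regularity I may assume each such set contains a perfect subset of cardinality $\mathfrak{c}$. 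It therefore suffices to produce a set $X\subseteq Z$ (automatically $\mu$-null by completeness) such that both $X+X$ and its complement meet every closed set of positive $\mu$-measure, for then $X+X$ is a $\mu$-Bernstein set and hence non-measurable.

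I would enumerate the closed sets of positive measure as $\{F_\alpha:\alpha<\mathfrak{c}\}$ and construct, by transfinite recursion, an increasing family $X_\alpha\subseteq Z$ together with a reserved set $R_\alpha=\{g_\beta:\beta<\alpha\}$ of points that are to be kept out of the final sumset. At stage $\alpha$ there are two tasks. In the \emph{positive} task I pick $f_\alpha\in F_\alpha$ with a representation $f_\alpha=z_1+z_2$, where $z_1,z_2\in Z$ (possible since $Z+Z=\mathbb{R}$), and adjoin $z_1,z_2$ to $X$, recording that $X+X$ meets $F_\alpha$. In the \emph{negative} task I choose $g_\alpha\in F_\alpha$ lying outside the current sumset $X_\alpha+X_\alpha$ and declare it reserved. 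The bookkeeping invariants are that $|X_\alpha|<\mathfrak{c}$ and that no reserved point is ever an element of $X+X$; since at stage $\alpha$ the sets $X_\alpha+X_\alpha$ and $R_\alpha$ have cardinality $<\mathfrak{c}$ while $F_\alpha$ has cardinality $\mathfrak{c}$, both selections can be made respecting all earlier reservations.

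Setting $X=\bigcup_{\alpha<\mathfrak{c}}X_\alpha\subseteq Z$, completeness gives $\mu(X)=0$, while by construction $X+X$ meets every $F_\alpha$ and, thanks to the reservations, so does its complement; hence $X+X$ is a $\mu$-Bernstein set and is non-measurable, as required. The genuinely delicate point — the one I expect to be the main obstacle — is the negative task: a reserved point $g_\alpha$ must avoid not only the sums available at stage $\alpha$ but also every sum created by the pairs adjoined at all later stages, so the reservations constrain all future positive steps. This forces one to control the representations $f=z_1+z_2$ used above, and an arbitrary pair $A,B$ need not furnish $\mathfrak{c}$ of them. Overcoming this is the technical heart of the argument: one must either pass to a $\mu$-null set $Z$ rich enough that every real has $\mathfrak{c}$ representations as a sum of two of its points (exploiting $H$-quasi-invariance to keep such an enriched set null), or, as the presence of the continuum hypothesis among the paper's hypotheses suggests, run the recursion with countable approximations so that at each stage only countably many forbidden values of $z_1$ must be avoided inside an uncountable $F_\alpha$. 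Once a representation always survives, the recursion closes and $X+X$ is the desired non-measurable sumset of a $\mu$-null set.
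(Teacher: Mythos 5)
The paper itself offers no proof of this statement---it is quoted from Kharazishvili [4]---so your attempt can only be judged on its own merits, and judged that way it has two genuine gaps.

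The first gap is the framework. Your reduction of non-measurability to a Bernstein-type property requires enumerating ``the closed sets of positive $\mu$-measure'' and invoking inner regularity. But $\mu$ is an \emph{arbitrary} non-zero, complete, $\sigma$-finite measure quasi-invariant under $H$: it need not be Borel, its domain need not contain a single closed set (consider the countable/co-countable measure, which is complete, $\sigma$-finite, diffused and affinely quasi-invariant), and no inner regularity is available. The topological characterization of absolute non-measurability (Proposition 1.3 of the paper) concerns \emph{all} $\sigma$-finite diffused Borel measures on a Polish space simultaneously; it says nothing about measurability with respect to one fixed abstract $\mu$. If you instead ran the recursion against $\mathrm{dom}(\mu)$ itself, you would have to meet every $\mu$-measurable set of positive measure, of which there may be as many as $2^{\mathfrak{c}}$---too many for a recursion of length $\mathfrak{c}$. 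This is exactly why proofs of this theorem do not build Bernstein sets against the measure: non-measurability is extracted from $\sigma$-finiteness \emph{combined with quasi-invariance} (for instance, a measurable non-null candidate set yields uncountably many pairwise disjoint images of positive measure under elements of $H$, contradicting $\sigma$-finiteness, while the null alternative is excluded by an exhaustion argument along a well-ordering of $Z$). A telling symptom is that, beyond diffuseness, your executed construction never uses quasi-invariance at all, although the theorem is false when that hypothesis does no work.

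The second gap is the one you flag yourself, and it is the heart of the matter rather than a technicality: at a positive stage you need some $f\in F_\alpha$ with a representation $f=z_1+z_2$, $z_1,z_2\in Z$, avoiding the accumulated bad set $S$, and a single discarded point of $Z$ can destroy the (possibly unique) representations of $\mathfrak{c}$ many reals. The natural repair is the one you gesture at: the set of $f$ all of whose representations are destroyed is contained in $S+Z$, a union of $|S|$ translates of the null set $Z$; but to conclude that this cannot swallow $F_\alpha$ you need (i) quasi-invariance under all translations, so that each $s+Z$ is null, and (ii) the assertion that a union of $|S|<\mathfrak{c}$ null sets cannot cover a set of positive $\mu$-measure, which is not a theorem of ZFC and in effect forces $|S|\le\aleph_0$, i.e., a CH-style hypothesis. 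Theorem 1.1, however, is stated (and holds) without CH, so even after this repair you would have proved a strictly weaker statement; and your alternative fix---enlarging $Z$ so that every real has $\mathfrak{c}$ representations---is obstructed by the very phenomenon the theorem is about, since a union of $\mathfrak{c}$ translates of a null set need not be null. As written, the recursion does not close.
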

In a topological spaces $T$ in which all singletons are Borel sets, a subset $X$ is called a set of universal measure zero [5] if for every $\sigma$-finite, diffused Borel measure $\mu$ on $T$, $\mu^*(X)=0$, where $\mu^*$ is the outer measure induced by $\mu$.\\
An universal measure zero subset of $\mathbb{R}$ is always a set of Lebesgue measure zero but not conversely. For although the Cantor set $C$ is a set of Lebesgue measure zero, there exists a Borel probability measure $\nu$ such that $\nu(C)>0$ whose completion is isomorphic to the restriction of the Lebesgue measure $\lambda$ on[0,1].\\
Several constructions of universal measure zero sets in uncountable Polish topological spaces can be found in the literature [2], [7], [8] etc. But their existance cannot be established within ZFC. They are obtained either by assuming continuum hypothesis or Martin Axiom. The Luzin set [5] (see alos [3]) (whose construction depends on the continuum hypothesis) and the generalized Luzin set [5] (see also [3]) (whose construction depends on the Martin Axiom) in $\mathbb{R}$ are examples of universal measure zero sets. Here is the general definition in any uncountable Polish topological space.\\
A subset $X$ of a Polish space $T$ is called a Luzin set (or, a generalized Luzin set) if card($X$)=$c$ and for every first category set $Z$ in $T$, card($X\cap Z$)$<\aleph_0$ (or, card($X\cap Z$)$<c$) where $\aleph_0$, $c$ denote respectively the first infinite cardinal and the cardinality of the continuum.\\
The following theorem by Kharazishvili [6] (see also [5]) which is a modified version of an earlier result of Erd\H{o}s, Kunen and Mauldin [1] shows that generalized Luzin set in a Polish topological vector space can have additional algebraic properties :
\begin{theorem}
	Suppose that Martin Axiom holds. Then in any Polish topological vector space $E(\neq\phi)$, there exist sets $L_1$ and $L_2$ such that :
	\begin{enumerate}[(i)]
		\item both $L_1$ and $L_2$ are are generalized Luzin sets in $E$.
		\item both $L_1$ and $L_2$ are vector spaces over the field $\mathbb{Q}$ of rational numbers.
		\item $L_1+L_2=E$ and $L_1\cap L_2=\{0\}$.
	\end{enumerate}
\end{theorem}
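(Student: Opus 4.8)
The plan is to construct $L_1$ and $L_2$ by transfinite recursion of length $c$, building them simultaneously so that at each stage we control both the intersections with meager sets (to guarantee the generalized Luzin property) and the algebraic structure (to guarantee they are $\mathbb{Q}$-vector spaces with $L_1+L_2=E$ and $L_1\cap L_2=\{0\}$). Under Martin's Axiom the key cardinal arithmetic fact I would exploit is that the union of fewer than $c$ meager sets is again meager (indeed has empty interior), so the family of meager sets is closed under unions of size $<c$; this is what makes a recursion of length $c$ feasible while keeping the ``small intersection with every meager set'' condition alive at every step.

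First I would fix an enumeration $\{F_\alpha : \alpha < c\}$ of a suitable cofinal (under inclusion) family of meager $F_\sigma$ sets, and an enumeration $\{e_\alpha : \alpha < c\}$ of a $\mathbb{Q}$-linear Hamel-type generating process for $E$, so that at stage $\alpha$ I decide how each new point of $E$ gets split into a sum of one element of $L_1$ and one element of $L_2$. Concretely, I would maintain two increasing chains of countable (or size-$<c$) $\mathbb{Q}$-subspaces $V_1^\alpha \subseteq L_1$ and $V_2^\alpha \subseteq L_2$ with $V_1^\alpha \cap V_2^\alpha = \{0\}$, and at each step adjoin new basis vectors. Because $E$ is a Polish topological vector space over $\mathbb{R}$ (hence over $\mathbb{Q}$) with $\dim_{\mathbb{Q}} E = c$, I can choose a Hamel basis of size $c$ and partition or distribute its elements between the two subspaces so that the direct-sum condition $L_1 \oplus L_2 = E$ is automatic at the end: split the Hamel basis $\{b_\xi\}$ into two disjoint pieces $B_1, B_2$ with $L_i = \mathrm{span}_{\mathbb{Q}}(B_i)$, which instantly gives (ii), $L_1 + L_2 = E$, and $L_1 \cap L_2 = \{0\}$.

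The genuinely delicate part is reconciling this rigid algebraic bookkeeping with the generalized Luzin condition, i.e. ensuring $\mathrm{card}(L_i \cap F) < c$ for every meager $F$. The obstacle is that fixing a Hamel basis outright may force many basis combinations into a single meager set. The remedy I would use is to choose the basis vectors themselves by recursion, using Martin's Axiom to guarantee that at stage $\alpha$, after only $<c$ constraints have been imposed, the set of ``forbidden'' points lying in $\bigcup_{\beta<\alpha} F_\beta$ together with the $<c$ points already generated by the two subspaces is a set of size $<c$, whereas $E$ has size $c$ and is not meager; hence I can always pick the next basis vector $b_\xi$ \emph{outside} all previously listed meager sets and outside the $\mathbb{Q}$-span of the already-chosen vectors. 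This keeps every finite (indeed every $<c$) combination landing in a prescribed meager $F_\alpha$ impossible beyond a set of size $<c$, securing the Luzin property for both $L_1$ and $L_2$ term by term.

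The step I expect to be the main obstacle is verifying the Luzin condition \emph{for sums}: a typical element of $L_i$ is not a single basis vector but a $\mathbb{Q}$-linear combination of finitely many of them, so I must argue that for a fixed meager $F$ only $<c$ such combinations can fall in $F$. I would handle this by a counting/absorption argument: at stage $\alpha$ each newly chosen basis vector is avoided from the meager set $F_\alpha$ \emph{and} from all $\mathbb{Q}$-affine translates $F_\alpha - v$ where $v$ ranges over the (size-$<c$) set of combinations already available from earlier basis vectors; since under Martin's Axiom a union of $<c$ meager sets remains meager (co-large), such a choice is always possible, and this guarantees that any $L_i$-element landing in $F_\alpha$ must use only basis vectors introduced before stage $\alpha$, of which there are $<c$, so it contributes at most $<c$ elements to $L_i \cap F_\alpha$. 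Passing from the cofinal family $\{F_\alpha\}$ to an arbitrary meager set $Z$ (which is contained in some meager $F_\sigma$, hence in some $F_\alpha$) then finishes the proof, and I would close by noting that since $\mathrm{card}(L_i)=c$ while $L_i$ meets every meager set in $<c$ points, each $L_i$ is by definition a generalized Luzin set, completing (i)–(iii).
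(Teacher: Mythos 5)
Your overall strategy is the right one, and it is essentially the strategy the paper itself uses --- not for Theorem 1.2, which the paper only quotes from Kharazishvili, but for its category-base generalization, Theorem 2.5: enumerate a cofinal family of meager sets and the points of the space, run a recursion of length $c$, split points into sums of two ``generic'' elements, close under $\mathbb{Q}$-spans, and exploit the fact that a union of fewer than $c$ meager sets is meager (MA in your setting, CH plus c.c.c plus point-meagerness in the paper's). However, as written your recursion does not establish the generalized Luzin property, and the failure point is concrete. At stage $\alpha$ you require the new generator to avoid only $F_\alpha$ and its translates $F_\alpha - v$, and from this you conclude that ``any $L_i$-element landing in $F_\alpha$ must use only basis vectors introduced before stage $\alpha$.'' That inference needs far more: it needs every generator $b_\beta$ chosen at every \emph{later} stage $\beta > \alpha$ to avoid the sets $q^{-1}(F_\alpha - v)$ for all nonzero rationals $q$ and all $v$ in the span available at stage $\beta$. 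Your prescription at stage $\beta$ excludes only translates of $F_\beta$, plus the raw union $\bigcup_{\gamma<\beta}F_\gamma$ without translates or dilates, so nothing prevents $q\,b_\beta + v \in F_\alpha$ for $\beta > \alpha$; in principle $c$ many such combinations can land in a single $F_\alpha$, which is exactly what must be ruled out. The repair is the cumulative condition the paper imposes in Theorem 2.5: the element chosen at stage $\beta$ must avoid $V^\beta + \mathbb{Q}\bigl(\bigcup_{\gamma\le\beta}F_\gamma\bigr)$, i.e.\ the span-so-far plus all rational dilates of \emph{all} meager sets enumerated so far (the paper's condition $x_\alpha \notin X' + \mathbb{Q}(\bigcup_{\beta<\alpha}K_\beta)$). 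This is still a union of fewer than $c$ meager sets, so MA keeps it meager and the choice remains possible; note that rational dilates are genuinely needed (elements of $L_i$ have the form $qb+v$, not just $b+v$), which your phrase ``$\mathbb{Q}$-affine translates $F_\alpha - v$'' does not clearly deliver.

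A second point of friction is your bookkeeping for surjectivity. You cannot simply ``choose a Hamel basis of size $c$ and split it'': a family of generators all chosen to avoid meager sets does not automatically span $E$, and spanning is the crux, since the points to be covered include every point of every meager set. The only way to reconcile genericity of the generators with $L_1+L_2=E$ is the point-splitting device you mention only in passing, and which is the engine of the paper's proof: enumerate all points $z_\alpha$ of $E$ and at stage $\alpha$ write $z_\alpha = x_\alpha + y_\alpha$, where $x_\alpha$ avoids the forbidden meager set for $L_1$ and $y_\alpha = z_\alpha - x_\alpha$ avoids the one for $L_2$; this is possible because the complement of the first forbidden set meets $z_\alpha$ minus the complement of the second. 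Once you do this, the generators need not be linearly independent, so $L_1\cap L_2=\{0\}$ is not ``instant''; the paper fixes it afterwards by replacing $L_2$ with a $\mathbb{Q}$-linear complement of $L_1\cap L_2$ inside $L_2$ (a subset of a generalized Luzin set is still one), whereas your alternative of maintaining trivial intersection during the recursion also works but requires avoiding an additional set of size less than $c$ at each stage, which you should make explicit. With these repairs your argument becomes correct and coincides in substance with the paper's.
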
 
A subset $X$ of a topological space $T$ in which all singleton sets are Borel is said to be absolutely non-measurable [5] in $T$ if for every $\sigma$-finite, diffused Borel measure $\mu$ on $T$, we have $X\notin$dom($\mu'$) where $\mu'$ is the completion of $\mu$.\\
In uncountable polish spaces, there is a purely topological characterization of absolutely non-measurable sets. For this, we need to introduce here the notion of a Bernstein set [5] (see also [3]).\\
A subset $B$ of a topological space $T$ is called a Bernstein set if for every nonempty perfect set $P$ of $T$, $B\cap P\neq\phi\neq(T-B)\cap P$.\\
The construction of a Bernstein set [3] is based on transfinite induction. Such sets have many important applications in general topology, measure theory and real analysis.\\
The following topological characterization [5] of an absolutely non-measurable set is well known.
\begin{proposition}
	In an uncountable Polish space $T$, a subset $Z$ is absolutely non-measurable iff it is a Bernstein set.
\end{proposition}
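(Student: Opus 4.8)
The plan is to prove both implications, leaning on two standard structural facts about Polish spaces: the \emph{perfect set property} for Borel sets (an uncountable Borel set contains a nonempty perfect set, hence every Borel set without a perfect subset is countable), and the existence on any nonempty perfect set of a nonzero, continuous (diffused), $\sigma$-finite Borel measure. Throughout I read ``measure'' in the definition of absolute non-measurability as \emph{nonzero} measure, since otherwise the notion is vacuous.

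First I would show that a Bernstein set $Z$ is absolutely non-measurable. Fix a nonzero $\sigma$-finite diffused Borel measure $\mu$ and argue by contradiction, assuming $Z\in\mathrm{dom}(\mu')$. By the usual description of the completion of a $\sigma$-finite measure there are Borel sets $B_1\subseteq Z\subseteq B_2$ with $\mu(B_2\setminus B_1)=0$. Since $Z$ is Bernstein, its complement meets every nonempty perfect set, so $B_1\subseteq Z$ can contain no nonempty perfect set $P$ (such a $P$ would satisfy $P\subseteq Z$, contradicting $(T-Z)\cap P\neq\emptyset$). Likewise $T-B_2\subseteq T-Z$ contains no nonempty perfect set, because $Z\subseteq B_2$ forces $Z\cap(T-B_2)=\emptyset$, contradicting that $Z$ meets $P$. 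By the perfect set property both $B_1$ and $T-B_2$ are countable, and diffuseness of $\mu$ makes both $\mu$-null. Then $\mu(B_2)=\mu(B_1)+\mu(B_2\setminus B_1)=0$ and $\mu(T)=\mu(B_2)+\mu(T-B_2)=0$, contradicting $\mu\neq 0$. Hence $Z\notin\mathrm{dom}(\mu')$ for every such $\mu$, i.e. $Z$ is absolutely non-measurable.

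For the converse I would prove the contrapositive: if $Z$ is not Bernstein, I exhibit a single nonzero $\sigma$-finite diffused Borel measure $\mu$ with $Z\in\mathrm{dom}(\mu')$. Failure of the Bernstein condition gives a nonempty perfect set $P$ with either $P\cap Z=\emptyset$ or $P\subseteq Z$. As a nonempty perfect subset of a Polish space, $P$ carries a continuous Borel probability measure $\nu$; I transport it to $T$ by setting $\mu(A)=\nu(A\cap P)$ for Borel $A$, obtaining a nonzero diffused (indeed finite) Borel measure concentrated on $P$, so $\mu(T-P)=0$. If $P\cap Z=\emptyset$ then $Z\subseteq T-P$, a $\mu$-null Borel set, so $Z\in\mathrm{dom}(\mu')$; if $P\subseteq Z$ then $T-Z\subseteq T-P$ is null, so $T-Z$ and hence $Z$ lies in $\mathrm{dom}(\mu')$. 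Either way $\mu$ measures $Z$, so $Z$ is not absolutely non-measurable.

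I expect the main obstacle to lie in the first implication, precisely in the clean deduction that the two Borel ``envelopes'' $B_1$ and $T-B_2$ of a Bernstein set are countable: this is where the perfect set property and the diffuseness of $\mu$ must be combined, and where $\sigma$-finiteness is used to guarantee the sandwiching Borel sets with null difference. The converse is comparatively routine once one recalls that every nonempty perfect set supports a continuous probability measure, e.g. through a homeomorphic copy of the Cantor set.
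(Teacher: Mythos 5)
Your proposal is correct, and both implications are carried out soundly: the forward direction correctly combines the Borel-hull description of completion-measurability for $\sigma$-finite measures with the perfect set property for Borel sets and diffuseness, and the converse correctly builds a witnessing measure by transporting a continuous probability measure from a copy of the Cantor set inside the perfect set $P$. Note, however, that the paper itself offers no proof of this proposition --- it is quoted as a known result with a reference to Kharazishvili [5] --- so there is no ``paper proof'' to compare against; your argument is the standard one from that source. Two points you handled that deserve explicit acknowledgment, since the paper's wording leaves them implicit: the measure in the definition of absolute non-measurability must be read as \emph{nonzero} (otherwise the zero measure makes the notion vacuous, as you observed), and $\sigma$-finiteness is genuinely needed to sandwich $Z$ between Borel sets $B_1\subseteq Z\subseteq B_2$ with $\mu(B_2\setminus B_1)=0$.
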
 
\newpage
Based on Theorem 1.2, Khrazishvili further proved [6] (see also [5]) that
\begin{theorem}
	Under Martin's Axiom, in any uncountable Polish topological vector spaces. there exist generalized Luzin sets $L_1$ and $L_2$ such that $L_1+L_2$ is a Bernstein set.
\end{theorem}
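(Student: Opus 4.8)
The plan is to deduce this from Theorem 1.2 by passing to suitable subsets. First I would apply Theorem 1.2 to the given uncountable Polish topological vector space $E$ to obtain generalized Luzin sets $G_1,G_2$ which are $\mathbb{Q}$-vector spaces with $G_1+G_2=E$ and $G_1\cap G_2=\{0\}$. Since $G_1,G_2$ are additive subgroups of $(E,+)$ with trivial intersection whose sum is all of $E$, every $x\in E$ has a \emph{unique} representation $x=\pi_1(x)+\pi_2(x)$ with $\pi_1(x)\in G_1$ and $\pi_2(x)\in G_2$. The decisive consequence of this uniqueness is that for any $A\subseteq G_1$ and $B\subseteq G_2$ one has $x\in A+B$ if and only if $\pi_1(x)\in A$ and $\pi_2(x)\in B$. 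Moreover, any subset of a generalized Luzin set having cardinality $c$ is again a generalized Luzin set, since it meets every first category set in fewer than $c$ points. Hence it suffices to construct $A\subseteq G_1$ and $B\subseteq G_2$, each of cardinality $c$, with $A+B$ a Bernstein set; these $A,B$ will be the required $L_1,L_2$.

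The construction is a transfinite recursion of length $c$, which I would carry out after two preliminary remarks. Observe that every nonempty perfect subset of $E$ contains a nonempty nowhere dense (hence first category) perfect subset, and that a point of $A+B$ (respectively of its complement) lying in such a subset also lies in $A+B$ (resp. its complement) within the larger perfect set; so it is enough to meet every nonempty \emph{nowhere dense} perfect set and its complement. Fix an enumeration $\{P_\alpha:\alpha<c\}$ of all such sets. The key quantitative fact I would establish is that $\mathrm{card}(\pi_1(P_\alpha))=\mathrm{card}(\pi_2(P_\alpha))=c$ for each $\alpha$. Indeed, the fibre of $\pi_1$ over a point $u\in G_1$ is $P_\alpha\cap(u+G_2)$; as the ideal of first category sets is translation invariant, $u+G_2$ is a translate of a generalized Luzin set and therefore meets the first category set $P_\alpha$ in fewer than $c$ points. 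Since Martin's Axiom makes $c$ a regular cardinal and renders a union of fewer than $c$ first category sets first category, $P_\alpha$ cannot be a union of fewer than $c$ such fibres, so its $\pi_1$-image has cardinality $c$, and likewise for $\pi_2$.

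I would then build four sets $A^+,A^-\subseteq G_1$ and $B^+,B^-\subseteq G_2$, each of cardinality $<c$ at every stage and with $A^+\cap A^-=\emptyset$ maintained throughout, finally setting $A=A^+$ and $B=B^+$. At stage $\alpha$ I first secure a point of $P_\alpha$ in the complement of $A+B$: using $\mathrm{card}(\pi_1(P_\alpha))=c>\mathrm{card}(A^+)$ I pick $y_\alpha\in P_\alpha$ with $\pi_1(y_\alpha)\notin A^+$ and place $\pi_1(y_\alpha)$ into $A^-$. Because $A^+\cap A^-=\emptyset$ is preserved for ever, $\pi_1(y_\alpha)\notin A$, so $y_\alpha\notin A+B$ regardless of which sums are created afterwards; this is precisely what makes the complement requirement robust against later additions. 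Next I secure a point of $P_\alpha$ in $A+B$: the sets $\{x\in P_\alpha:\pi_1(x)\in A^-\}$, $\{x\in P_\alpha:\pi_2(x)\in B^-\}$, $\{x\in P_\alpha:\pi_1(x)\in A^+\}$ and $\{x\in P_\alpha:\pi_2(x)\in B^+\}$ are unions of fewer than $c$ small fibres, hence of cardinality $<c$, so I may choose $x_\alpha\in P_\alpha$ avoiding all four and adjoin $\pi_1(x_\alpha)$ to $A^+$ and $\pi_2(x_\alpha)$ to $B^+$; then $x_\alpha=\pi_1(x_\alpha)+\pi_2(x_\alpha)\in A+B$, the invariant $A^+\cap A^-=\emptyset$ is respected, and the freshness of the adjoined elements guarantees $\mathrm{card}(A)=\mathrm{card}(B)=c$. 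After all $c$ stages, $A+B$ meets every $P_\alpha$ and its complement, hence every nonempty perfect set and its complement, so $A+B$ is a Bernstein set; by Proposition 1.3 it is even absolutely non-measurable.

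The main obstacle, and the step I would treat most carefully, is the quantitative fact $\mathrm{card}(\pi_i(P_\alpha))=c$ together with the smallness of the fibres $P_\alpha\cap(u+G_i)$: the entire recursion rests on these, and they in turn depend on the reduction to nowhere dense perfect sets, on the translation invariance of the first category ideal, and on the two consequences of Martin's Axiom invoked above, namely the regularity of $c$ and the closure of the first category ideal under unions of fewer than $c$ sets. The remaining bookkeeping, maintaining $A^+\cap A^-=\emptyset$ and the cardinality bounds at limit and successor stages, is then routine.
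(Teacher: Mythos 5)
Your proof is correct, but it takes a genuinely different route from the one in the paper (the paper quotes Theorem 1.4 from Kharazishvili and proves its category-base generalization as Theorem 2.10; that proof is the natural point of comparison). The paper keeps one of the two sets whole: it takes $L_1=R_1$, the full $\mathbb{Q}$-subspace supplied by Theorem 1.2, and builds only $L_2$ as a set of coset representatives $y_\alpha\in R_2$, chosen at stage $\alpha$ so that the coset $R_1+y_\alpha$ meets $P_\alpha$; and it controls the complement not by blockers but by an even/odd device: the enumeration of perfect sets is arranged so that the even-indexed and the odd-indexed subfamilies each exhaust all nonempty perfect sets, producing two sets $R_1+T_2$ and $R_1+T_3$ which are disjoint (distinct cosets of $R_1$ are disjoint, since $R_1\cap R_2=\{0\}$) and each of which meets every perfect set; as each lies in the complement of the other, each is Bernstein. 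The key smallness input there is that a union of fewer than $c$ translates $R_1+y_\beta$ cannot cover any nonempty perfect set (in the paper's language: it is rare, hence Marczewski singular). You instead shrink both factors, building $A\subseteq G_1$ and $B\subseteq G_2$, and you exploit the uniqueness of the decomposition $x=\pi_1(x)+\pi_2(x)$ to certify membership (via $x_\alpha$) and non-membership (via $y_\alpha$ and the permanent blocker set $A^-$) in $A+B$. This costs you two ingredients the paper's route never needs, namely the reduction to nowhere dense perfect sets and the fibre-counting lemma $\mathrm{card}(\pi_1(P_\alpha))=c$ (which rests on the regularity of $c$ under Martin's Axiom), but in exchange it dispenses with the even/odd trick and gives direct, local control of both halves of the Bernstein property. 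Both arguments are transfinite recursions of length $c$ driven by the same phenomenon, that translates of generalized Luzin sets meet first category sets in fewer than $c$ points, so they are cousins; yours is more computational, while the paper's is slicker and transfers verbatim to the category-base setting, which is the paper's whole purpose. One cosmetic remark: your set $B^-$ is never populated, so the avoidance clause involving it is vacuous and could be deleted.
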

Luzin set [8] (see also [3]) in the real line was constructed by Luzin in 1914 using continuum hypothesis. Although a smiliar construction was given earlier in 1913 by Mahalo [8], in literature this set is commonly known as the Luzin set probabaly because Luzin investigated these sets more thoroughly and proved a number of its important properties. The construction of a generalized Luzin set (under Martin's Axiom) in $\mathbb{R}$ imitates the classical construction of Luzin.\\
The dual of a Luzin set in $\mathbb{R}$ is the Sierpi\'nski set [8] (see also [3]) constructed by Sierpi\'nski in 1924 using the same continuum hypothesis. It is an uncountable set of reals having countable intersection with every set of Lebesgue measure zero. The construction of a generalized Sierpi\'nski set (under Martin's construction) [8] (see also [3]) imitates the classical construction of Sierpi\'nski.\\
The concept of a Luzin set and its dual the Sierpi\'nski set can be unified under the general concept of `rare set' [9] in category bases. In this article, we formulate under continuum hypothesis, generalized versions of Theorem 1.2 and Theorem 1.4 in category bases with the underlying set having Polish topological vector space structure. Since under continuum hypothesis, a generalized rare set is a rare set, we state our results interms of rare sets instead of generalized rare sets.
\section{PRELIMINARIES AND RESULTS}
The concept of a category base is a generalization of both measure and topology. Its main objective is to present both measure and Baire category (topology) and also some other aspects of point set classification within a common framework. It was introduced by J.C.Morgan II [9] in the seventies of the last century and since then has been developed through a series of papers [10], [11], [12], [13] etc.\\
\begin{definition}[\cite{QM1979}]
A pair $(X,\mathcal{C}$) where $X$ is a non-empty set and $\mathcal{C}$ is a family of subsets of $X$ is called a category base if the non-empty members of $\mathcal{C}$ called regions satisfy the following set of axioms :
\begin{enumerate}
\item Every point of $X$ is contained in at least one region; i.e., $X=\cup$ $\mathcal{C}$.
\item Let $A$ be a region and $\mathcal{D}$ be any non-empty family of disjont regions having cardinality less than the cardinality of $\mathcal{C}$.
\begin{enumerate}[(i)]
	\item If $A \cap ( \cup \mathcal{D}$) contains a region, then there exists a region $D\in\mathcal{D}$ such that $A\cap D$ contains a region.
	\item If $A\cap(\cup \mathcal{D})$ contains no region, then there exists a region $B\subseteq A$ which is disjoint from every region in $\mathcal{D}$.
\end{enumerate}
\end{enumerate}
\end{definition}
Several examples of category bases may be found in [9] and also in other references as stated in the above paragraph.
\begin{definition}[\cite{QM1979}] 
	In a category base ($X,\mathcal{C}$), a set is called `singular' if every region contains a subregion which is disjoint from the set itself. A set which can be expressed as a countable union of singluar sets is called `meager'. Otherwise, it is called `abundant'. A set is called `Baire set' if every region contains a subregion in which either the set or its complement is meager. The class of all meager (resp, Baire) sets in a category base ($X,\mathcal{C}$) is here denoted by the symbol $\mathcal{M}(\mathcal{C})$ (resp, $\mathcal{B}(\mathcal{C})$).
\end{definition}
\begin{definition}[\cite{QM1979}]
	A category base ($X,\mathcal{C}$) is called 
	\begin{enumerate}[(i)]
		\item `point-meager' if every singleton set in it is meager.
		\item  a `Baire base' if every region in it is abundant.
		\item a `perfect base' if $X=\mathbb{R}^n$ and $\mathcal{C}$ is a family of perfect subsets of $\mathbb{R}^n$ such that for every region $A$ in $\mathcal{C}$ and for every point $x\in A$, there is a descending sequence \{$A_n\}_{n=1}^\infty$ of regions such that $x\in A_n$ and diam($A_n)\leq\frac{1}{n}$, $n=1,2,...$.\\	and	
		\item a `translation base' if $X=\mathbb{R}$ with 
		\begin{enumerate}[(a)]
			\item $\mathcal{C}$ as a translation invariant family, i.e., $E\in \mathcal{C}$ and $r\in\mathbb{R}$ implies $E(r)\in\mathcal{C}$ where $E(r)=\{x+r:x\in E\}$, and 
			\item if $A$ is any region and $D$ is a countable everywhere dense set in $\mathbb{R}$, then $\bigcup\limits_{r\in D}A(r)$ is abundant everywhere.
		\end{enumerate}
	\end{enumerate}
\end{definition}
To this, we also add that 
\begin{definition} [\cite{QM1979}]
	A category base ($X,\mathcal{C}$) 
	\begin{enumerate}[(i)]
		\item satisfies countable chain condition (c.c.c) if every subfamily of mutually disjoint regions is atmost countable.
		\item is linearly invariant if $X=\mathbb{R}$, $\mathcal{C}$ is translation invariant and for every $E\in \mathcal{C}$, $\lambda(\neq 0)\in \mathbb{R}$, we have $\lambda E\in \mathcal{C}$.  
	\end{enumerate} 
\end{definition}
\newpage
\begin{theorem}
	Suppose continuum hypothesis hold. Then in any point-meager base ($\mathbb{R},\mathcal{C}$)  satisfying c.c.c where $\mathcal{C}$ is a linearly invariant family having cardinality $\leq 2^{\aleph_0}$ and such that $\mathbb{R}$ is abundant, we can construct sets $E$ and $F$ such that 
	\begin{enumerate}[(i)]
		\item both $E$ and $F$ are rare sets in this category base.
		\item both $E$ and $F$ are vector spaces over the field $\mathbb{Q}$ of rationals.
		\item $\mathbb{R}=E+F$ and $E\cap F=\{0\}$.
	\end{enumerate}
\end{theorem}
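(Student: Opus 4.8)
The plan is to reduce everything to the construction of a single Hamel basis of $\mathbb{R}$ over $\mathbb{Q}$, built by transfinite recursion of length $\omega_1$ (the continuum hypothesis gives $2^{\aleph_0}=\aleph_1$). I would build a Hamel basis $B=\{b_\alpha:\alpha<\omega_1\}$ together with a partition $B=B_E\sqcup B_F$, and then set $E=\mathrm{span}_{\mathbb Q}(B_E)$ and $F=\mathrm{span}_{\mathbb Q}(B_F)$. With this choice, conditions (ii) and (iii) are automatic: $E$ and $F$ are $\mathbb Q$-vector spaces by construction; $E+F=\mathrm{span}_{\mathbb Q}(B_E\cup B_F)=\mathrm{span}_{\mathbb Q}(B)=\mathbb R$; and since $B_E$ and $B_F$ are disjoint parts of one basis, any vector lying in $E\cap F$ has two expansions with disjoint supports in $B$, forcing it to be $0$. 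Thus the entire burden of the proof falls on arranging rarity, condition (i), while simultaneously keeping $B$ spanning.

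For the bookkeeping I would fix an enumeration $\mathbb R=\{x_\alpha:\alpha<\omega_1\}$ and, using $\mathrm{card}(\mathcal C)\le 2^{\aleph_0}=\aleph_1$ together with the c.c.c., a cofinal family $\{M_\gamma:\gamma<\omega_1\}$ of meager sets, meaning that every meager set of the base is contained in some $M_\gamma$. (Such a family is obtained by first selecting a cofinal family of singular sets coded by the at most $\aleph_1$ regions and their countable descending sequences, and then, under CH, forming the at most $\aleph_1^{\aleph_0}=\aleph_1$ countable unions of these; every meager set, being a countable union of singular sets, is then absorbed into one member.) Point-meagerness guarantees that every countable subset of $\mathbb R$ is meager, and the hypothesis that $\mathbb R$ is abundant guarantees that no meager set exhausts $\mathbb R$; these two facts are what create room at every step.

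The heart of the argument is the recursion step, where linear invariance does the essential work. Suppose at stage $\alpha$ the countable sets $E_\alpha=\mathrm{span}_{\mathbb Q}(B_E\cap\{b_\beta:\beta<\alpha\})$ and $F_\alpha$ have been built. To preserve rarity of $E$ I must ensure that every \emph{new} element $e+qb$ produced by a fresh generator $b$ avoids all meager sets seen so far; since $e+qb\in M_\gamma$ is equivalent to $b\in\frac1q(M_\gamma-e)$, the forbidden locus for $b$ is
\[
U_E^\alpha \;=\; \bigcup_{\gamma\le\alpha}\ \bigcup_{q\in\mathbb Q\setminus\{0\}}\ \bigcup_{e\in E_\alpha}\ \tfrac1q\,(M_\gamma-e),
\]
and by linear invariance each set $\tfrac1q(M_\gamma-e)$ is again meager, so $U_E^\alpha$ is a countable union of meager sets and hence meager; define $U_F^\alpha$ symmetrically. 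To also cover $x_\alpha$ when it is not yet in the current span, I would pick a single generator $b$ lying outside $U_E^\alpha\cup\big(x_\alpha-U_F^\alpha\big)$ and outside the countable set of points that would destroy $\mathbb Q$-independence, which is possible because this union is meager-plus-countable and therefore not all of $\mathbb R$; then I put $b\in B_E$ and its partner $b'=x_\alpha-b\in B_F$, so that $x_\alpha=b+b'\in E+F$ while both new generators avoid the relevant meager sets. If $x_\alpha$ is already in the current span, then $x_\alpha\in E+F$ already and no new generator is required.

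Finally, rarity is read off the construction: given any meager $M\subseteq M_\gamma$, every element of $E$ created at a stage $\alpha\ge\gamma$ was chosen to miss $M_\gamma$, so $E\cap M\subseteq E\cap M_\gamma\subseteq E_\gamma$, and $E_\gamma$ is a $\mathbb Q$-span of countably many vectors, hence countable; thus $E$ (and symmetrically $F$) is rare. I expect the main obstacle to be two-fold. The substantive difficulty is controlling the \emph{whole} $\mathbb Q$-span, not merely the basis vectors, against every meager set; this is exactly what linear invariance resolves, by turning the forbidden locus into a countable union of meager sets that abundance of $\mathbb R$ keeps from covering the line. The more delicate preliminary point is producing the cofinal family $\{M_\gamma\}$ of size $\aleph_1$ from the bound on $\mathrm{card}(\mathcal C)$ and the c.c.c.; securing this cofinality for the meager $\sigma$-ideal of the base is where the structural hypotheses on $\mathcal C$ are genuinely needed.
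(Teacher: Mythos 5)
Your proposal is correct and its core is the same as the paper's: a transfinite recursion of length $\omega_1$ (via CH) against a cofinal family of at most $2^{\aleph_0}$ meager sets, where at each stage the enumerated point is split into a sum of two points, each avoiding a forbidden locus that linear invariance and point-meagerness keep meager and that abundance of $\mathbb{R}$ keeps from covering the line; rarity is then read off stage-by-stage exactly as in your final paragraph. The paper's forbidden set $X'+\mathbb{Q}\bigl(\bigcup_{\beta<\alpha}K_\beta\bigr)$ is literally your $U_E^\alpha$ in disguise, and its cofinal family is the family of meager $\mathcal{K}_{\delta\sigma}$ sets given by Morgan's theorem (Th 5, II, Ch 1 of [9]) that under c.c.c. every meager set sits inside a meager $\mathcal{K}_{\delta\sigma}$ set — this is the citation that discharges what you rightly flag as the delicate preliminary point; note the coding there is by countable \emph{maximal disjoint} families of regions (countable by c.c.c.), not by descending sequences as in your parenthetical. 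The one genuine divergence is how $E\cap F=\{0\}$ is secured: the paper builds the increasing spans $X_\alpha=\mathbb{Q}x_\alpha+X'$, $Y_\alpha=\mathbb{Q}y_\alpha+Y'$ with no independence bookkeeping, and at the very end replaces $F$ by a subspace complement of $E\cap F$ in $F$; you instead maintain a single $\mathbb{Q}$-independent family $B=B_E\cup B_F$ with $B_E\cap B_F=\emptyset$ throughout, so (iii) is automatic. Your variant is arguably tidier here: the paper's post hoc replacement silently requires checking that the complementary subspace is still rare (countable intersections with meager sets are inherited, but one should also confirm the complement keeps full cardinality), whereas your only extra cost is the countable exclusion set for independence at each stage, which point-meagerness absorbs into the meager forbidden locus.
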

\begin{proof}
	Let $\Omega$ denotes the least ordinal number representing $2^{\aleph_0}$ which is the cardinality of the continuum. Since $\mathcal{C}$ satisfies countable chain condition, every meager set in this category base is contained in a meager $\mathcal{K}_{\delta\sigma}$ set where $\mathcal{K}$ denotes the family of all sets whose complements are members of $\mathcal{C}$ (Th 5, II, Ch 1, [9]). Moreover, as the category base is point meager and given that the cardinality of $\mathcal{C}$ is atmost $2^{\aleph_0}$, by (Th 1, I, Ch 2, [9]), the cardinality of the family of all meager $\mathcal{K}_{\delta\sigma}$ sets is $2^{\aleph_0}$.\\
	Let \{$z_\alpha\}_{\alpha<\Omega}$ and \{$K_\alpha\}_{\alpha<\Omega}$ be enumeration of points and meager $\mathcal{K}_{\delta\sigma}$ sets in $\mathbb{R}$. Using transfinite recurssion, we construct two $\Omega$-sequences \{$X_\alpha\}_{\alpha<\Omega}$, \{$Y_\alpha\}_{\alpha<\Omega}$ of subsets of $\mathbb{R}$ in the following manner : \\
	Suppose, for any ordinal $\alpha<\Omega$, the partial $\alpha$-sequences \{$X_\beta:\beta<\alpha$\} and \{$Y_\beta:\beta<\alpha$\} are already constructed. Since ($\mathbb{R},\mathcal{C}$) is point-meager and $\mathcal{C}$ is linearly invariant, so under continuum hypothesis both $X'+\mathbb{Q}(\bigcup\limits_{\beta<\alpha}K_\beta)$ and $Y'+\mathbb{Q}(\bigcup\limits_{\beta<\alpha}K_\beta)$ are meager in $\mathbb{R}$ where $X'=\bigcup\limits_{\beta<\alpha}X_\beta$  and $Y'=\bigcup\limits_{\beta<\alpha}Y_\beta$. Hence, ($\mathbb{R}$\textbackslash $\{X'+\mathbb{Q}(\bigcup\limits_{\beta<\alpha}K_\beta)\})\cap(z_\alpha-(\mathbb{R}$\textbackslash$\{Y'+\mathbb{Q}(\bigcup\limits_{\beta<\alpha}K_\beta)\}))\neq\phi$ and therefore there exist $x_\alpha\in \mathbb{R}$\textbackslash$\{X'+\mathbb{Q}(\bigcup\limits_{\beta<\alpha}K_\beta)\}$ and $y_\alpha\in \mathbb{R}$\textbackslash$\{Y'+\mathbb{Q}(\bigcup\limits_{\beta<\alpha}K_\beta)\}$ such that $z_\alpha=x_\alpha+y_\alpha$.\\
	We set $X_\alpha=\mathbb{Q}x_\alpha+X'$ and $Y_\alpha=\mathbb{Q}y_\alpha+Y'$.\\
	Proceeding in this manner, we get two $\Omega$-sequences \{$X_\alpha\}_{\alpha<\Omega}$, \{$Y_\alpha\}_{\alpha<\Omega}$ of sets in $\mathbb{R}$ with the following properties :
	\begin{enumerate}[(i)]
		\item for each $\alpha<\Omega$, card($X_\alpha$) = card($Y_\alpha$) = card($\alpha$)+$\omega$.
		\item if $\beta<\alpha$, then $K_\beta\cap X_\beta=K_\beta\cap X_\alpha$ and  $K_\beta\cap Y_\beta=K_\beta\cap Y_\alpha$.
	\end{enumerate}
	Setting $E=\bigcup\limits_{\alpha<\Omega}X_\alpha$ and $F=\bigcup\limits_{\alpha<\Omega}Y_\alpha$, we find that $E$ and $F$ are rare sets which are also vector spaces over $\mathbb{Q}$ such that $\mathbb{R}=E+F$.\\
	We know that in any vector space over a field, if $U$ and $V$ are subspaces, then there exists a subspace $V'\subseteq V$ such that $U+V=U+V'$ and $U\cap V'=\{0\}$. Since $U\cap V$ is a subspace, $V'$ may be taken as a subspace of $V$ complemented to $U\cap V$. Thus finally in the above procedure, we may assume that $E\cap F=\{0\}$. This proves the theorem.
\end{proof}
It may be noted here that any perfect base is a point-meager Baire base (Th 3, I, Ch 5, [9]), so if it is linearly invariant, then Theorem 2.5 can be reproduced in an exactly similar manner in any Polish topological vector space.
\begin{theorem}
	Let $X$ be Polish topological vector space. Then under continuum hypothesis, in any perfect base ($X,\mathcal{C}$) satisfying c.c.c where $\mathcal{C}$ is a linearly invariant family having cardinality $\leq 2^{\aleph_0}$, there exist sets $R_1$ and $R_2$ such that 
	\begin{enumerate}[(i)]
		\item both $R_1$ and $R_2$ are rare sets.
		\item both $R_1$ and $R_2$ are vector spaces over the field $\mathbb{Q}$ of rationals.
		\item $X=R_1+R_2$ and $R_1\cap R_2=\{0\}$.
	\end{enumerate}
\end{theorem}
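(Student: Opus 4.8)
The plan is to run the same transfinite recursion as in the proof of Theorem 2.5, transporting every step from $\mathbb{R}$ to the Polish topological vector space $X$. The only structural inputs of that argument were: (a) the base is point-meager; (b) every meager set is contained in a meager $\mathcal{K}_{\delta\sigma}$ set; (c) the family of meager $\mathcal{K}_{\delta\sigma}$ sets has cardinality $2^{\aleph_0}$; (d) meagerness is preserved by translations and by multiplication by nonzero scalars; and (e) the ambient space is abundant. By the result cited in the remark above (Th 3, I, Ch 5, [9]), a perfect base is automatically point-meager and a Baire base, so (a) holds for free, and since every region is abundant and regions cover $X$, the space $X$ is abundant, giving (e) without an extra hypothesis. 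Facts (b) and (c) follow verbatim from c.c.c.\ together with point-meagerness and $\mathrm{card}(\mathcal{C})\leq 2^{\aleph_0}$ exactly as before (Th 5, II, Ch 1 and Th 1, I, Ch 2 of [9]), while (d) is precisely the content of linear invariance of $\mathcal{C}$, now interpreted in $X$ as invariance under translations by vectors of $X$ and under scaling by nonzero reals.

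Granting these, I would fix the least ordinal $\Omega$ of cardinality $2^{\aleph_0}=\aleph_1$ (by CH) and enumerate the points $\{z_\alpha\}_{\alpha<\Omega}$ and the meager $\mathcal{K}_{\delta\sigma}$ sets $\{K_\alpha\}_{\alpha<\Omega}$, then build $\{X_\alpha\}$ and $\{Y_\alpha\}$ by recursion. At stage $\alpha$, with $X'=\bigcup_{\beta<\alpha}X_\beta$ and $Y'=\bigcup_{\beta<\alpha}Y_\beta$ both of cardinality $<\aleph_1$ (hence countable under CH) and both $\mathbb{Q}$-subspaces, the sets $X'+\mathbb{Q}\bigl(\bigcup_{\beta<\alpha}K_\beta\bigr)$ and $Y'+\mathbb{Q}\bigl(\bigcup_{\beta<\alpha}K_\beta\bigr)$ are meager: $\bigcup_{\beta<\alpha}K_\beta$ is a countable union of meager sets, applying a nonzero scalar keeps each summand meager by (d), the value $q=0$ contributes only $\{0\}$ which is meager by point-meagerness, and the final countably many translates by elements of $X'$ stay meager by (d) again.

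The decisive step is the nonemptiness of the intersection
\[
\Bigl(X \setminus \bigl[X' + \mathbb{Q}\bigcup_{\beta<\alpha}K_\beta\bigr]\Bigr) \cap \Bigl(z_\alpha - \bigl(X \setminus \bigl[Y' + \mathbb{Q}\bigcup_{\beta<\alpha}K_\beta\bigr]\bigr)\Bigr).
\]
Here the affine bijection $y\mapsto z_\alpha-y$ is the composition of scaling by $-1$ (allowed by linear invariance) with translation by $z_\alpha$, so it carries meager sets to meager sets; consequently the second factor equals $X$ minus a meager set, and so does the first. Their intersection is therefore $X$ minus a meager set, which is nonempty precisely because $X$ is abundant by (e). This yields $x_\alpha,y_\alpha$ with $z_\alpha=x_\alpha+y_\alpha$ lying outside the respective bad sets, and setting $X_\alpha=\mathbb{Q}x_\alpha+X'$, $Y_\alpha=\mathbb{Q}y_\alpha+Y'$ maintains, by the same computation as in Theorem 2.5, the invariants $\mathrm{card}(X_\alpha)=\mathrm{card}(\alpha)+\omega$ and $K_\beta\cap X_\beta=K_\beta\cap X_\alpha$ for $\beta<\alpha$ (and likewise for $Y$).

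Setting $R_1=\bigcup_{\alpha<\Omega}X_\alpha$ and $R_2=\bigcup_{\alpha<\Omega}Y_\alpha$ then gives $\mathbb{Q}$-subspaces with $X=R_1+R_2$, since every $z_\alpha$ is realized as $x_\alpha+y_\alpha$. Passing to the limit in the second invariant gives $K_\beta\cap R_1=K_\beta\cap X_\beta$, which forces $R_1$ to meet each meager $\mathcal{K}_{\delta\sigma}$ set—and hence, by (b), each meager set—in a set of cardinality $<2^{\aleph_0}$; this is exactly the property making $R_1$ (and symmetrically $R_2$) a rare set. Finally, to secure $R_1\cap R_2=\{0\}$ I would invoke the same algebraic fact used before: replace $R_2$ by a $\mathbb{Q}$-subspace $R_2'\subseteq R_2$ complementary to $R_1\cap R_2$, so that $R_1+R_2'=R_1+R_2=X$ and $R_1\cap R_2'=\{0\}$, without disturbing rareness. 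I expect the only genuine obstacle to be the bookkeeping needed to extend ``linear invariance'' and the attendant meagerness-preservation, namely (d) and the reflection $y\mapsto -y$, to a general topological vector space, since $X$ no longer carries the order and dilation structure of $\mathbb{R}$ implicit in Definition 2.4; every other step is a transcription of the Theorem 2.5 recursion.
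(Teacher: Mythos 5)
Your proposal is correct and takes essentially the same route as the paper: the paper proves this theorem simply by remarking that any perfect base is a point-meager Baire base (Th 3, I, Ch 5 of [9]), so that the transfinite recursion of Theorem 2.5 can be ``reproduced in an exactly similar manner'' in the Polish topological vector space. Your transcription of that recursion---deriving point-meagerness and abundance of $X$ from the perfect-base hypothesis, reinterpreting linear invariance as invariance under vector translations and nonzero scalings, and repeating the meager $\mathcal{K}_{\delta\sigma}$ argument and the final $\mathbb{Q}$-subspace complement adjustment---is exactly what the paper intends.
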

\begin{definition}[\cite{QM1979}]
	Two category bases ($X,\mathcal{C}$) and ($X,\mathcal{D}$) are said to be equivalent if $\mathcal{M}(\mathcal{C})=\mathcal{M}(\mathcal{D})$ and $\mathcal{B}(\mathcal{C})=\mathcal{B}(\mathcal{D})$.
\end{definition}
\begin{definition}[\cite{QM1979}]
	Two category bases ($X,\mathcal{C}$) and ($X,\mathcal{D}$) are called complementary if $X$ is representable as the union of two disjoint sets $M$ and $N$, where $M$ is $\mathcal{C}$-meager and $N$ is $\mathcal{D}$-meager.
\end{definition}
In our next result, we use the following theorem by Sander
\begin{theorem}[\cite{ASDG3425}]
	Two perfect translation bases ($X,\mathcal{C}$) and ($X,\mathcal{D}$) in any topological group which is a complete separable space and without isolated points are non-equivalent iff they are complementary.
\end{theorem}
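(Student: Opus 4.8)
The statement is a biconditional, so the plan is to prove the two implications separately; I expect the forward implication (non-equivalent $\Rightarrow$ complementary) to carry all the weight. Throughout I will use that a perfect base is a point-meager Baire base, so that every region — and hence $X$ itself — is abundant in either base: a subset of a singular set is singular, so if $X$ were $\mathcal{C}$-meager then every region, being covered by the singular pieces, would be $\mathcal{C}$-meager, contradicting the Baire property. The same holds for $\mathcal{D}$.

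For the easy direction (complementary $\Rightarrow$ non-equivalent), suppose $X=M\cup N$ with $M\cap N=\emptyset$, $M\in\mathcal{M}(\mathcal{C})$ and $N\in\mathcal{M}(\mathcal{D})$, and assume toward a contradiction that the bases are equivalent, so $\mathcal{M}(\mathcal{C})=\mathcal{M}(\mathcal{D})$. Then $M\in\mathcal{M}(\mathcal{D})$ as well, whence $X=M\cup N$ is a union of two $\mathcal{D}$-meager sets and therefore $\mathcal{D}$-meager, contradicting that $X$ is $\mathcal{D}$-abundant. Hence the bases are non-equivalent.

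For the converse, suppose the bases are non-equivalent. The first step is to reduce the failure of equivalence to a concrete witness: after possibly interchanging $\mathcal{C}$ and $\mathcal{D}$, I claim there is a set $S$ that is $\mathcal{D}$-meager yet $\mathcal{C}$-abundant. If the meager ideals already differ this is immediate; a discrepancy occurring only at the level of the Baire $\sigma$-algebras can be localized to the same kind of witness by testing membership on regions. Writing $S=\bigcup_n S_n$ with each $S_n$ being $\mathcal{D}$-singular, and using that a countable union of $\mathcal{C}$-meager sets is $\mathcal{C}$-meager, some piece $S_0:=S_{n_0}$ must be $\mathcal{C}$-abundant. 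This $S_0$, simultaneously $\mathcal{D}$-singular and $\mathcal{C}$-abundant, is the seed of the decomposition. Using the category-base axioms I would then pass to a region $A$ in which $S_0$ is abundant everywhere. The group structure now enters: fixing a countable dense set $\{d_k\}_k$ and forming the translated union $N=\bigcup_k S_0(d_k)$, translation invariance of $\mathcal{D}$ makes each $S_0(d_k)$ again $\mathcal{D}$-singular, so $N$ is $\mathcal{D}$-meager; meanwhile the density axiom that $\bigcup_{r\in D}A(r)$ is abundant everywhere, combined with $S_0$ being $\mathcal{C}$-abundant in $A$, forces $N$ to be $\mathcal{C}$-abundant in every region. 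Setting $M:=X\setminus N$, the goal is that $M\in\mathcal{M}(\mathcal{C})$, giving the complementary partition $X=M\cup N$.

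The main obstacle is exactly the passage from ``$\mathcal{C}$-abundant everywhere'' to ``$\mathcal{C}$-residual'' (equivalently, to $X\setminus N$ being genuinely $\mathcal{C}$-meager), together with the linked difficulty of feeding the region-based density axiom from a seed $S_0$ that need not contain a region. Abundance everywhere only says $N$ meets every region abundantly, which is weaker than having $\mathcal{C}$-meager complement; this is the precise analogue of the classical fact that $\mathbb{R}$ splits into a meager set and a null set via a comeager $G_\delta$ null set $\bigcap_n U_n$ built from open dense sets of vanishing measure. I therefore expect the correct construction to be not a single translated copy but a decreasing sequence of such translation-generated sets, chosen so that their $\mathcal{D}$-size shrinks while each remains $\mathcal{C}$-residual, with $N$ taken as their intersection; the complement $M$ is then exhibited as a countable union of $\mathcal{C}$-singular layers. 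Verifying that the density axiom (b) and the descending-region structure of the perfect base actually deliver $\mathcal{C}$-singularity of each complementary layer — rather than mere non-abundance — is the delicate core of the argument, and is where translation invariance on \emph{both} sides, the completeness and separability of the group, and the absence of isolated points are used essentially.
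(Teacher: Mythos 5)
First, a point of order: the paper does not prove this statement at all --- it is Sander's decomposition theorem, quoted with a citation to reference [12] and used as a black box in the proof of Theorem 2.10. So there is no proof in the paper to compare against, and your attempt has to be judged on its own merits against what a correct proof requires.

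Your easy direction (complementary $\Rightarrow$ non-equivalent) is correct. The hard direction, however, contains a genuine gap, one which you flag yourself but never close: everything from ``I therefore expect the correct construction to be\dots'' onward is a plan, not an argument, and the plan as sketched is not merely incomplete but unsound. The problem is that your seed $S_0$ (chosen $\mathcal{D}$-singular and $\mathcal{C}$-abundant) need not be a $\mathcal{C}$-Baire set, and for non-Baire sets the implication ``abundant everywhere $\Rightarrow$ complement meager'' --- the implication your construction must ultimately invoke --- is simply false, and no union or intersection of translates of $S_0$ can restore it. Concretely, take $\mathcal{C}$ to be the measure base on $\mathbb{R}$ (meager $=$ Lebesgue null) and $\mathcal{D}$ the topology base (singular $=$ nowhere dense); these are non-equivalent perfect translation bases. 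Partition $\mathbb{R}$ into two $\mathbb{Q}$-invariant sets $P_1$ and $P_2$, each of full outer measure, by splitting the $\mathbb{Q}$-cosets via transfinite induction against an enumeration of the closed sets of positive measure, and put $S_0 = P_1 \cap F$ with $F$ a fat Cantor set. Then $S_0$ is nowhere dense (so $\mathcal{D}$-singular) and has positive outer measure (so $\mathcal{C}$-abundant), yet $\bigcup_{q\in\mathbb{Q}} S_0(q) \subseteq P_1$, so its complement contains $P_2$ and is very far from null. Since $P_1$ can just as well be made invariant under the group generated by any prescribed countable dense set, every set you can manufacture from translates of $S_0$ stays inside $P_1$, and its complement is never $\mathcal{C}$-meager. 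So the decomposition provably cannot be extracted from the raw witness by your method.

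The missing idea is to transfer from the witness to \emph{regions} before translating. Regions of a perfect base are perfect, hence analytic, and by Morgan's theorem every analytic set is a Baire set with respect to any perfect base; thus each $\mathcal{D}$-region is automatically a $\mathcal{C}$-Baire set and vice versa. A correct proof shows that non-equivalence produces a region $B$ of one base that is meager in the other; then $\bigcup_{r} B(r)$, with $r$ ranging over a countable dense set, is a Baire set in $B$'s own base and abundant everywhere there by translation axiom (b), hence has meager complement there (this is where ``Baire $+$ abundant everywhere $\Rightarrow$ residual'' is legitimately available), while remaining meager in the other base by translation invariance. That analyticity-of-regions input, which is what the hypotheses ``perfect base'' and ``complete separable, no isolated points'' are really feeding, appears nowhere in your outline. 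A secondary, smaller gap: your reduction of non-equivalence to a witness in the meager ideals waves at the case $\mathcal{M}(\mathcal{C})=\mathcal{M}(\mathcal{D})$ but $\mathcal{B}(\mathcal{C})\neq\mathcal{B}(\mathcal{D})$; that case also needs an actual argument.
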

The above theorem is also valid for any topological group $X$ with the property that each topologically dense subset of $X$ contains a countable dense subset [12].
\begin{theorem}
	Let $X$ be a Polish topological vector space. Then under continuum hypothesis, in any perfect translation base ($X,\mathcal{C}$) satisfying c.c.c where $\mathcal{C}$ is linearly invariant having cardinality $\leq 2^{\aleph_0}$, there exist two sets such that 
	\begin{enumerate}[(i)]
		\item both are rare sets.
		\item their sum is a Bernstein set in $X$.
		\item they are meager in any perfect translation base which is non-equivalent to ($X,\mathcal{C}$).
	\end{enumerate}
\end{theorem}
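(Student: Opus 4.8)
The plan is to refine the transfinite construction from the proof of Theorem 2.5 so that the two sets keep the rare-set property while their sum, instead of exhausting $X$, is forced to be a Bernstein set. I would run a recursion of length $\Omega$, the least ordinal of cardinality $2^{\aleph_0}$, which under the continuum hypothesis equals $\omega_1$, so that every proper initial segment is countable. As in Theorem 2.5, since $(X,\mathcal C)$ is a perfect base it is point-meager, and together with the c.c.c.\ every meager set is contained in a meager $\mathcal K_{\delta\sigma}$ set, the family of which has cardinality $2^{\aleph_0}$; enumerate these as $\{K_\alpha\}_{\alpha<\Omega}$. Since $X$ is an uncountable Polish space it carries exactly $2^{\aleph_0}$ nonempty perfect sets; enumerate them as $\{P_\alpha\}_{\alpha<\Omega}$. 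During the recursion I build increasing $\mathbb Q$-subspaces $R_1^{\alpha},R_2^{\alpha}$ together with a set $\{q_\beta:\beta<\alpha\}$ of ``forbidden'' points that are to stay out of $R_1+R_2$ forever.

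At stage $\alpha$ put $R_i^{<\alpha}=\bigcup_{\beta<\alpha}R_i^{\beta}$ and $S^{\alpha}=R_1^{<\alpha}+R_2^{<\alpha}$, all countable. To make $P_\alpha$ meet the sum I would choose $p_\alpha\in P_\alpha$ and a splitting $p_\alpha=x_\alpha+y_\alpha$ and set $R_1^{\alpha}=R_1^{<\alpha}+\mathbb Q x_\alpha$, $R_2^{\alpha}=R_2^{<\alpha}+\mathbb Q y_\alpha$, subject to two families of constraints. For rareness I demand, exactly as in Theorem 2.5, that $x_\alpha\notin M_1^{\alpha}:=\mathbb Q\big(\bigcup_{\beta<\alpha}K_\beta\big)+R_1^{<\alpha}$ and $y_\alpha\notin M_2^{\alpha}:=\mathbb Q\big(\bigcup_{\beta<\alpha}K_\beta\big)+R_2^{<\alpha}$; by linear invariance, and since $\alpha$ is countable under the continuum hypothesis, each $M_i^{\alpha}$ is a countable union of meager sets and so is meager. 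For the complementary half of the Bernstein requirement I demand that $x_\alpha,y_\alpha$ not pull any forbidden point into the sum, i.e.\ $q_\beta\notin R_1^{\alpha}+R_2^{\alpha}$ for all $\beta<\alpha$. Finally I reserve a point $q_\alpha\in P_\alpha$ with $q_\alpha\notin R_1^{\alpha}+R_2^{\alpha}$ and add it to the forbidden set.

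The heart of the argument, and the step I expect to be the main obstacle, is that all of this can be done at once. Writing $y_\alpha=p_\alpha-x_\alpha$ gives $R_1^{\alpha}+R_2^{\alpha}=S^{\alpha}+\mathbb Q x_\alpha+\mathbb Q p_\alpha$, so the condition $q_\beta\notin R_1^{\alpha}+R_2^{\alpha}$ splits into a constraint excluding only countably many values of $p_\alpha$ (the terms with no $x_\alpha$) and, once $p_\alpha$ is fixed, a constraint excluding only countably many values of $x_\alpha$; the requirement $y_\alpha\notin M_2^{\alpha}$ reads $x_\alpha\notin p_\alpha-M_2^{\alpha}$, again a meager set. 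Thus $p_\alpha$ is to be chosen in $P_\alpha$ outside a countable set, which is possible because $P_\alpha$ has cardinality $2^{\aleph_0}$, and then $x_\alpha$ is to be chosen outside the meager set $M_1^{\alpha}\cup(p_\alpha-M_2^{\alpha})$ and outside countably many points; since $(X,\mathcal C)$ is a Baire base the complement of a meager set is abundant, and since it is point-meager removing countably many points keeps it abundant, hence nonempty. The reserved $q_\alpha$ exists because $R_1^{\alpha}+R_2^{\alpha}$ is countable while $P_\alpha$ has cardinality $2^{\aleph_0}$. Verifying that the forbidden points genuinely remain excluded through all later stages, as the subspaces keep growing, is the delicate bookkeeping, and it is exactly here that point-meagerness, linear invariance and the continuum hypothesis act together.

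Granting the recursion, set $R_1=\bigcup_{\alpha<\Omega}R_1^{\alpha}$ and $R_2=\bigcup_{\alpha<\Omega}R_2^{\alpha}$. The avoidance of $M_i^{\alpha}$ yields, just as in Theorem 2.5, that $R_i\cap K_\beta$ is already determined at stage $\beta+1$ and so is countable for every $\beta$; as every meager set lies inside some $K_\beta$, each $R_i$ meets every meager set in a set of cardinality $<2^{\aleph_0}$, i.e.\ $R_1$ and $R_2$ are rare, which is (i). By construction $p_\alpha\in P_\alpha\cap(R_1+R_2)$ and $q_\alpha\in P_\alpha\setminus(R_1+R_2)$ for every $\alpha$, so $R_1+R_2$ meets every perfect set and every perfect set meets its complement; hence $R_1+R_2$ is a Bernstein set, which is (ii). For (iii), let $(X,\mathcal D)$ be a perfect translation base non-equivalent to $(X,\mathcal C)$. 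By Theorem 2.9 the two bases are complementary, so $X=M\cup N$ with $M$ being $\mathcal C$-meager and $N$ being $\mathcal D$-meager. Then $R_i\cap N\subseteq N$ is $\mathcal D$-meager, while $M\subseteq K_\beta$ for some $\beta$ forces $R_i\cap M\subseteq R_i\cap K_\beta$ to be countable, hence $\mathcal D$-meager because $(X,\mathcal D)$ is again a perfect base and so point-meager; therefore $R_i=(R_i\cap M)\cup(R_i\cap N)$ is $\mathcal D$-meager, which is (iii). If one also wishes $R_1\cap R_2=\{0\}$, the subspace-complementation argument used at the end of the proof of Theorem 2.5 applies verbatim.
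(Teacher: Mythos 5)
Your proposal is correct, but it follows a genuinely different route from the paper's proof. The paper is modular: it first quotes Theorem 2.6 to obtain rare $\mathbb{Q}$-subspaces $R_1,R_2$ with $R_1+R_2=X$ and $R_1\cap R_2=\{0\}$, and then runs a \emph{second} transfinite recursion choosing points $y_\alpha\in R_2$ so that $R_1+y_\alpha$ meets the perfect set $P_\alpha$. The step enabling that choice is not a cardinality count (the union $\bigcup_{\beta<\alpha}(R_1+y_\beta)$ has full cardinality $2^{\aleph_0}$) but Morgan's theorem that rare sets in a perfect base satisfying c.c.c.\ are Marczewski singular: under CH this union is rare, hence contains no perfect set, so some $z\in P_\alpha$ lies outside it, and writing $z\in R_1+y$ with $y\in R_2$ one sets $y_\alpha=y$. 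Splitting the $y_\alpha$ into even- and odd-indexed families $T_2,T_3$ (each index family again exhausting all perfect sets) gives disjoint sets $R_1+T_2$ and $R_1+T_3$ --- disjoint because $R_1\cap R_2=\{0\}$ and both are built from $\mathbb{Q}$-subspaces --- each meeting every perfect set; hence each is Bernstein, and the theorem's two sets are $R_1$ and $T_2$. You instead interleave the ``hit'' and ``miss'' requirements in a single recursion: the half of the Bernstein property that the paper gets for free from the disjoint twin $R_1+T_3$, you must enforce by hand by protecting the reserved points $q_\beta$ at all later stages. Your identity $R_1^{\alpha}+R_2^{\alpha}=S^{\alpha}+\mathbb{Q}x_\alpha+\mathbb{Q}p_\alpha$ does close this: the terms without $x_\alpha$ exclude only countably many values of $p_\alpha$, the remaining terms exclude countably many values of $x_\alpha$ once $p_\alpha$ is fixed, and the invariant survives limit stages because the sum of the increasing unions is the increasing union of the sums --- so the bookkeeping you flag as the main obstacle is in fact sound. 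As to what each approach buys: yours is more self-contained (Marczewski singularity is never needed, since your partial sums are countable under CH so cardinality suffices, and Theorem 2.6 is absorbed into the recursion rather than invoked) and it makes both summands $\mathbb{Q}$-vector spaces; the paper's is shorter given its prerequisites and yields extra structure, namely that the complement of the Bernstein sum contains a second Bernstein set of the same form. Your argument for item (iii) is essentially identical to the paper's, only with more detail spelled out.
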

\begin{proof}
	According to Theorem 2.6, there exist rare sets $R_1$ and $R_2$ which are vector spaces over $\mathbb{Q}$ and $X=R_1+R_2$, $R_1\cap R_2=\{0\}$. Let \{$P_\alpha\}_{\alpha<\Omega}$ be an enumeration of all nonempty perfect sets in $X$ ($X$ being Polish, we know that the cardinality of all nonempty perfect sets in $X$ is $c$). It may be assumed that each of the subfamilies \{$P_\alpha:\alpha<\Omega$, $\alpha$ is an even ordinal\} and \{$P_\alpha:\alpha<\Omega$, $\alpha$ is an odd ordinal\} also contains all nonempty perfect subsets of $X$.\\
	Using transfinite recurssion, we now construct an injective $\Omega$-sequence of points in $R_2$ in the following manner :\\
	Suppose for any ordinal $\alpha<\Omega$, the partial sequence \{$y_\beta:\beta<\alpha$\} is already constructed. As $\bigcup\limits_{\beta<\alpha}(R_1+y_\beta)$ is a rare set because of continuum hypothesis and translation invariance  of $\mathcal{C}$ and every rare set in a perfect base satisfying c.c.c is Marczewski singular (Th 8, II, Ch 5, [9]), so $P_\alpha-\bigcup\limits_{\beta<\alpha}(R_1+y_\beta)\neq\phi$. We choose a point $z$ from the set $P_\alpha-\bigcup\limits_{\beta<\alpha}(R_1+y_\beta)$. By virtue of the equality $X=R_1+R_2$, there exists $y\in R_2$ such that $z\in R_1+y$. We put $y_\alpha=y$.\\
	We define $T_2=\{y_\alpha:\alpha<\Omega$, $\alpha$ is an even ordinal\} and $T_3=\{y_\alpha: \alpha<\Omega$, $\alpha$ is an odd ordinal\}.\\
	Then both $T_2$ and $T_3$ have cardinality $2^{\aleph_0}$ and according to the construction, they are also rare sets. Moreover, for any even ordinal $\alpha<\Omega$,
	\begin{equation*}
		P_\alpha\cap (R_1+T_2)\supseteq P_\alpha\cap(R_1+y_\alpha)\neq\phi
	\end{equation*}
and likewise for any odd ordinal $\alpha<\Omega$,
\begin{equation*}
		P_\alpha\cap (R_1+T_3)\supseteq P_\alpha\cap(R_1+y_\alpha)\neq\phi.
\end{equation*}
As $R_1\cap R_2=\{0\}$, so $(R_1+T_2)\cap(R_1+T_3)=\phi$ and both $R_1+T_2$ and $R_1+T_3$ are Bernstein sets in $X$.\\
Let ($X,\mathcal{D}$) be any perfect translation base such that ($X,\mathcal{C}$) and ($X,\mathcal{D}$) are nonequivalent. Then they are complementary (by Th 2.9) and accordingly $X$ can be expressed as the disjoint union of sets $M$ and $N$ where $M$ is $\mathcal{C}$-meager and $N$ is $\mathcal{D}$-meager. Obviously then the two rare sets constructed above are both $\mathcal{D}$-meager.\\
Hence the theorem. 
\end{proof}

Thus we find from the above theorem that under continuum hypothesis in any Polish topological vector space $X$, there exists for any perfect base ($X,\mathcal{C}$) where $\mathcal{C}$ is linearly invariant having cardinality atmost $2^{\aleph_0}$ and satisfying c.c.c, two  rare sets whose Minkowski's sum is non-Baire in any perfect base. In addition, if ($X,\mathcal{C}$) is also a translation base, then each of these rare sets is also meager in any perfect translation base which is nonequivalent (or, complementary) to ($X,\mathcal{C}$).
\bibliographystyle{plain}

	\end{document}